\documentclass{amsart}
\usepackage{amsmath,amsfonts,amsthm}
\usepackage{mathrsfs}
\usepackage{amssymb}
\usepackage{bm}
\usepackage{cite}
\usepackage{graphicx, epsfig}
\usepackage{hyperref}
\usepackage{soul}
\usepackage{xcolor}

\usepackage[margin=3cm]{geometry}

\newtheorem{theorem}{Theorem}[section]
\theoremstyle{plain}

\newtheorem{corollary}[theorem]{Corollary}

\newtheorem{lemma}[theorem]{Lemma}

\newtheorem{remark}[theorem]{Remark}

\numberwithin{equation}{section}

\usepackage{hyperref}
\hypersetup{
    colorlinks   = true,
    urlcolor     = blue,
    citecolor    = red,
    bookmarksopen=true
}

\title{Estimates on elliptic equations that hold only where the Hessian is large}
\author{Amit Kumar Acharya}
\address[Amit Kumar Acharya]{SRM University Amaravati, Andhra Pradesh-522502, India}
\email{amitacharya.h@gmail.com, amitkumar\_acharya@srmap.edu.in}
\author{Ram Baran Verma}
\address[Ram Baran Verma]{SRM University Amaravati, Andhra Pradesh-522502, India}
\email{rambv88@gmail.com, rambaran.v@srmap.edu.in}
\subjclass[2010]{Primary 35J60, 35D40.}
\keywords{Degenerate fully nonlinear elliptic equations, H\"{o}lder estimate}

\begin{document}
\begin{abstract}
In this article, we establish H\"older regularity for viscosity solutions to a class of degenerate fully nonlinear elliptic equations of the form
\[
F(D^2u,Du)=f(x)~~\text{in}~~B_1,
\]
where the operator is elliptic only in regions where the Hessian is sufficiently large. Such equations arise naturally in free boundary problems and models with partial ellipticity. The proof combines a modified cusp function with a decomposition of the contact set in a point-to-measure argument. As a consequence, interior H\"older continuity follows under natural structural assumptions.
\end{abstract}
\maketitle
\section{Introduction}
In this article, we study regularity properties of viscosity solutions to a class of degenerate fully nonlinear elliptic equations of the form
\begin{equation}\label{aleso2}
F(D^{2}u,Du)=f \quad \text{in}~~ B_{1},
\end{equation}
where the operator $F$ is elliptic only in regions where the Hessian is sufficiently large. More precisely, ellipticity is assumed to hold only when $|D^{2}u|>\kappa$, while no uniform ellipticity is imposed in the complementary region. Such equations arise naturally in problems where the governing equation is active only on part of the domain, for instance in free boundary problems and constrained variational models.\\
A motivating example is provided by Figalli and Shahgholian \cite{figalli2014general}, where the authors consider solutions satisfying
\begin{equation*}\label{ales1}
\left\{
\begin{aligned}
|F(D^{2}u)|&\leq C_{0} \quad \text{a.e. in } \Omega \cap B_{1},\\
|D^{2}u|&\leq \kappa_{0} \quad \text{a.e. in } B_{1}\setminus \Omega,
\end{aligned}
\right.
\end{equation*}
with $\Omega \subset B_{1}$ open. In this formulation, the equation is enforced only in the region where the Hessian is large, while elsewhere the solution satisfies a constraint. This naturally leads, in the viscosity sense, to inequalities of the form
\begin{equation}\label{al3}
\left\{
\begin{aligned}
\mathcal{L}^{-}(D^{2}u,Du)&\leq C_{0},\\
\mathcal{L}^{+}(D^{2}u,Du)&\geq -C_{0},
\end{aligned}
\right.
\end{equation}
where ellipticity is present only outside a fixed neighborhood of the origin in the space of symmetric matrices, where $\mathcal{L}^{\pm}$ is defined in \eqref{hoill}. The regularity theory for uniformly elliptic fully nonlinear equations is classical and well developed see, for instance, Caffarelli and Cabré \cite{caffarelli1995fully}. However, when ellipticity holds only in a restricted region, these methods cannot be directly applied. In particular, the Krylov--Safonov theory and the Aleksandrov--Bakelman--Pucci estimate rely fundamentally on uniform ellipticity, see\cite{krylov1981certain,krylov1983boundedly}.
For degenerate elliptic equations, analogous estimates have been established in \cite{imbert2016estimates,savin2007small}. In \cite{savin2007small}, Savin proved H\"{o}lder regularity for a class of degenerate elliptic equations that are uniformly elliptic whenever the solution, its gradient, and its Hessian are sufficiently small. The proof is based on the method of sliding paraboloids. On the other hand, Imbert and Silvestre \cite{imbert2016estimates}, established H\"{o}lder regularity for equations that are elliptic only when the gradient is sufficiently large. Their proof is based on a refined point-to-measure estimate obtained by sliding a singular cusp function
\[
\eta(x)=-|x|^{1/2}
\]
from below. At every contact point, the gradient of the solution coincides
with the gradient of the test function. Since the equation is assumed to be
uniformly elliptic whenever the gradient is large and $Du=D\eta=-\frac{1}{2}|x|^{-3/2}x$ at all the contact point away from the origin. As $|D\eta|$ is large in a suitable deleted neighbourhood of the origin so we can use the equation to get uniform estimate on the Jacobian. So we get the result.\\
The present work deals with a fundamentally different situation, where
ellipticity depends on the size of the Hessian rather than the gradient.
Although at the touching point(away from the origin) gradient of the solution and test function matches but Hessian does not matches. Precisely,
\begin{equation}\label{nega11}
\left\{
\begin{aligned}
&Du(z)=D\eta(z)\\
&\text{and}\\
&D^{2}u(z)\geq D^{2}\eta(z).
\end{aligned}
\right.
\end{equation}
As our considered equation is uniformly elliptic when Hessian is large, so for simplifying the calculation we can use $\eta(x)=-10|x|^{3/2}.$ But despite this choice, by using the equation \eqref{nega11}, we only get $[D^{2}u(z)]^{-}\leq D^{2}\eta(z).$ This bound does not give any information on the size of Hessian like in the case when dealing with large gradient case as in \cite{imbert2016estimates}. The present result extends the point-to-measure strategy of Imbert and Silvestre \cite{imbert2016estimates} to a fundamentally different
type of degeneracy. Unlike the gradient-dependent setting, the equation cannot be invoked directly at contact points because the touching condition provides only a one-sided comparison of the Hessian. Our approach overcomes this difficulty by combining geometric information with a decomposition of the
contact set into regions where the equation is applicable and where purely geometric arguments suffice. This yields a new mechanism for obtaining uniform
Jacobian estimates in Hessian-dependent degenerate elliptic equations. The main novelty of the paper is a new way of overcoming this difficulty. Instead of attempting to use the equation everywhere, we decompose the contact set into two disjoint parts according to the size of the Hessian of the
solution, that is, $A=G\cup B$ where
\begin{equation*}
\left\{
\begin{aligned}
&G=\{z\in~A:\ |D^{2}u(z)|>\kappa\},\\
&B=\{z\in A:\ |D^{2}u(z)|\le\kappa\},
\end{aligned}
\right.
\end{equation*}
$A$ is the set of contact point and $\kappa$ is a parameter in the definition of operator, see \eqref{hoill}. The applicability of this decomposition method relies on the interesting observation that for a suitable choice of $\kappa$ we can get uniform bound on the Jacobian of transport map (which maps the contact set to the set of vertices) on the bad part of the contact set without even using the equation, see \eqref{jac111}. While on the good part $G$ of the contact set we can use the equation to get bound on the Jacobian of the transport map. Inspired from the above discussion and \cite{imbert2016estimates}, this article establishes the following theorem
\begin{theorem}\label{mainthm}
Let $u\in C(\overline{B}_{1})$ satisfy \eqref{al3} and $\|u\|_{L^{\infty}(B_{1})}\leq C_{0}$. Then $u\in C^{\alpha}(B_{1/2})$ and
\[
\|u\|_{C^{\alpha}(B_{1/2})}\leq C C_{0},
\]
where $\alpha$ depends on $\lambda,\Lambda,n$ and $C$ depends on $\kappa/C_{0},\lambda,\Lambda,n$.
\end{theorem}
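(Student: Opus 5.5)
We follow the scheme of Imbert and Silvestre: a point-to-measure (``$L^\varepsilon$-type'') lemma, then a covering argument giving decay of the distribution function, then oscillation decay. First we normalize. Replacing $u$ by $u/C_0$ changes $\kappa$ to $\kappa/C_0$ and the right-hand side to $1$, so we may assume $C_0=1$. The constants will then depend on $\kappa/C_0$, which is allowed in the statement. Because $\mathcal{L}^\pm$ are positively homogeneous only when the Hessian is large, each rescaling $u_r(x)=u(x_0+rx)/\mathrm{osc}$ used in the iteration moves the threshold to $\kappa r^2/\mathrm{osc}$. We will check that this stays in an admissible range: a smaller $\kappa$ is harmless, and the source term $r^2/\mathrm{osc}$ is small as long as the oscillation decays like $r^\alpha$ with $\alpha<2$.

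The core step is a point-to-measure lemma. Suppose $u\ge 0$ in $B_1$, $u$ is a supersolution, i.e.\ $\mathcal{L}^-(D^2u,Du)\le 1$, and $\inf_{B_{1/4}}u\le 1$. Then $|\{u\le M\}\cap B_{1/2}|\ge \mu|B_1|$ with $M,\mu$ universal. To prove it, we slide from below cusps $\eta_y(x)=-10|x-y|^{3/2}$ (suitably scaled), with vertices $y$ in a small ball $V\subset B_{1/4}$, and let $A$ be the set of contact points. At a contact point $z$ we have $Du(z)=D\eta_y(z)$. We invert this relation to express $y$ as a function of $z$ and $Du(z)$, obtaining a transport map $T:A\to V$. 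It is Lipschitz on $A$ after using the semiconcavity given by touching from below, since $u$ is touched from above by paraboloids at points of $A$ once we also exploit the touching from below; if needed we sup-convolve first. Its Jacobian involves $D^2u(z)-D^2\eta_y(z)\ge 0$ and $D^2\eta_y(z)^{-1}$. We then split $A=G\cup B$ as in the introduction.

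On $B$, where $|D^2u|\le\kappa$, we bound $\det DT$ directly. The factor $D^2u(z)-D^2\eta_y(z)$ has norm at most $\kappa+|D^2\eta_y(z)|$. The factor $D^2\eta_y(z)^{-1}$ has norm of order $|z-y|^{1/2}$, which is bounded above on the relevant region. This gives $|\det DT|\le C(\kappa)$ with no use of the equation; this is the estimate referred to as \eqref{jac111}. On $G$, where $|D^2u|>\kappa$, we have ellipticity. The inequality $\mathcal{L}^-\le 1$, together with the lower bound $D^2u\ge D^2\eta_y$, bounds the positive eigenvalues of $D^2u-D^2\eta_y$ in terms of the negative part, the gradient term and $1$. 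By AM--GM this again gives $\det DT\le C$. By the area formula, $|V|\le\int_A|\det DT|\le C|A|$. Moreover $A\subset\{u\le M\}\cap B_{1/2}$, since the cusps are bounded on $B_{1/2}$ and $u(z)=\eta_y(z)+c$ with $c\le 1+\max|\eta|$. This yields the lemma.

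Once this lemma holds, the remaining steps follow Imbert--Silvestre and Caffarelli--Cabré. A Calder\'on--Zygmund cube decomposition, using the lemma at every scale (legitimate because rescaling only lowers $\kappa$), gives $|\{u>M^k\}\cap B_{1/2}|\le(1-\mu)^k$. Applying this to $u-\inf$ and to $\sup-u$, using the subsolution inequality for the latter, gives a weak Harnack-type oscillation reduction $\mathrm{osc}_{B_{r/4}}u\le(1-\theta)\,\mathrm{osc}_{B_r}u+Cr^2$ whenever $\mathrm{osc}_{B_r}u\ge \kappa r^2$. If instead $\mathrm{osc}_{B_r}u<\kappa r^2$, the desired decay is immediate. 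Iterating these two cases gives $C^\alpha$ with $\alpha=\alpha(\lambda,\Lambda,n)$ and constant depending on $\kappa$.

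We expect the main obstacle to be making the Jacobian bound on $B$ rigorous. The difficulty is that $|D^2u|$ need not exist pointwise for a viscosity solution. We would handle this by working with the semiconcave envelope or an inf-convolution, where $D^2u$ exists a.e.\ on $A$. We would also check carefully that the cusp exponent $3/2$ makes $D^2\eta^{-1}$ bounded on the contact region, while the vertex $z=y$ is excluded by a measure-zero argument.
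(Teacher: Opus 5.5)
Your point-to-measure argument is essentially the paper's Theorem~\ref{measure}: same cusp, same transport map, and the same split $A=G\cup B$, with the equation used only on $G$ and the equation-free bound \eqref{jac111} on $B$. Normalizing by the oscillation at each scale, with the trivial case $\mathrm{osc}_{B_r}u<\kappa r^2$ set aside, is a valid alternative to the paper's one-time rescaling by $\theta$.

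The gap is the sentence claiming that a Calder\'on--Zygmund decomposition ``using the lemma at every scale'' gives $|\{u>M^k\}\cap B_{1/2}|\le(1-\mu)^k$. Your lemma says that a low point in $B_{1/4}$ forces mass of $\{u\le M\}$ in the \emph{larger} ball $B_{1/2}$. Contrapositively, density information on a ball gives $u>1$ only on a \emph{smaller} concentric ball. The cube decomposition, and equally the ink-spot Lemma~\ref{covering1}, needs the reverse: $|B\cap\{u>M^{k+1}\}|>(1-\delta)|B|$ must give $u>M^k$ on all of $B$. Equivalently, a low point anywhere in the parent cube must force mass in the child cube. With only the shrinking conclusion the covering step is false. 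Take $G=H=B_{1/2}$ and $\delta$ small: every ball with $|B\cap G|>(1-\delta)|B|$ has its concentric quarter inside $H$, yet $|G|\le(1-c\delta)|H|$ fails. Sliding cusps cannot repair this on their own, because contact points can land anywhere in the region spanned by the low point and the vertices, not in a prescribed small ball.

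What is missing is a positivity-expansion step, and here it is where the Hessian degeneracy must be handled a second time. The paper supplies it in Lemma~\ref{com} and Theorem~\ref{corols}. The barrier $c_M(|x|^{-\beta}-2^{-\beta})$ with $\beta$ large has $|D^2|\gtrsim c_M\beta|x|^{-\beta-2}$, so its Hessian exceeds $\kappa$ on all of $B_2\setminus\{0\}$. Hence wherever it touches $u$ from below, the elliptic branch of $\mathcal{L}^-_\kappa$ is active and comparison applies. This upgrades ``$u>M$ on $B_{1/4}$'' to ``$u>1$ on $B_1$''. Combined with the point-to-measure estimate it gives ``$|\{u>M\}\cap B_1|>(1-\delta)|B_1|\Rightarrow u>1$ on $B_1$'', which is the form the covering lemma consumes. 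Rescaling only lowers the threshold, so the barrier stays admissible at every scale.

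Three smaller corrections:
\begin{enumerate}
\item The fixed points $z=T(z)$ need not form a null set. If $u\equiv0$ near $x_0$, every nearby vertex is its own contact point. Either take vertices only in $\{u>M\}$ as the paper does, which keeps $|z-T(z)|\ge\varepsilon_0$ and gives the local Lipschitz bound on $Du|_A$, or handle $\{T=\mathrm{id}\}$ separately.
\item The regularization for a supersolution is the inf-convolution, not a sup-convolution.
\item The inclusion $A\subset B_{1/2}$ needs $V$ centered near the low point $x_0$. Your justification only shows $u\le M$ on $A$.
\end{enumerate}
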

The rest of the paper is organized as follows. In Section 2 we collect the necessary definitions and preliminary results. Section 3 is devoted to the proof of the H\"{o}lder estimate via a point-to-measure argument. 
\section{Auxiliary definitions and  results}
Given three positive constants $\lambda,\Lambda$ and $\kappa$ we define the following degenerate elliptic operator
\begin{equation}\label{hoill}
\mathcal{L}_{(\lambda, \Lambda),\kappa}^{\pm}\big(X,p\big)=\left\{
\begin{aligned}
&\Lambda\text{Tra}(X)^{\pm}-\lambda\text{Tra}(X)^{\mp}+\Lambda|p|~~\quad \text{if}~~|(X)|>\kappa,\\ 
&\pm\infty~~\quad \text{if}~~|(X)|\leq \kappa,\\
\end{aligned}
\right.
\end{equation}
for real symmetric matrix $X$ and $p\in \mathbb{R}^{n}.$ From here on wards $\lambda$ and $\Lambda$ will be fixed and constants depending on $n,\lambda,\Lambda$ are called universal constants. Therefore, we denote $\mathcal{L}_{(\lambda, \Lambda),\kappa}^{\pm}\big(D^{2}u,Du\big)$ by $\mathcal{L}_{\kappa}^{\pm}\big(D^{2}u,Du\big).$\\
\textbf{Scaling:}~The operator $\mathcal{L}_{\kappa}^{-}\big(D^{2}u,Du\big)$ satisfies the following scaling property:
\begin{enumerate}
\item{} If $u\in C^{2}(\Omega)$ $\mathcal{L}_{\kappa}^{-}\big(D^{2}u,Du\big)\leq C_{0}$~in~$\Omega$
then for $r,\gamma\in[0,1],$ $u_{r}(x)=\gamma u(x_{0}+rx)$  satisfies $\mathcal{L}_{r^{2}\gamma\kappa}^{-}\big(D^{2}u_{r},Du_{r}\big)\leq \gamma C_{0} r^{2}$~in~$x_{0}+r\Omega,$
where
\begin{equation*}
\mathcal{L}_{r^{2}\gamma\kappa}^{-}\big(X,p\big)=\left\{
\begin{aligned}
&\lambda\text{Tra}(X)^{+}-\Lambda\text{Tra}(X)^{-}-r\Lambda|p|~~\quad \text{if}~~|(X)^{-}|>r^{2}\kappa\gamma,\\ 
&-\infty~~\quad \text{if}~~|(X)^{-}|\leq r^{2}\kappa\gamma r.\\
\end{aligned}
\right.
\end{equation*}
\item{} We also have the following relation $\kappa_{1}\geq \kappa_{2}$
\begin{equation*}\label{order}
\mathcal{L}_{\kappa_{1}}^{+}\big(X,p\big)\geq \mathcal{L}_{\kappa_{2}}^{+}\big(X,p\big)
\geq \mathcal{L}_{\kappa_{2}}^{-}\big(X,p\big)\geq \mathcal{L}_{\kappa_{1}}^{-}\big(X,p\big),\\
\end{equation*}
for all $p\in \mathbb{R}^{n}$ and $n\times n$ real symmetric matrix $X.$
\end{enumerate}
\begin{remark}\label{remsc}
As a consequence of above relation we find that if $\mathcal{L}_{\kappa}^{-}\big(D^{2}u,Du\big)\leq C_{0}$~in~$\Omega$ then 
$\mathcal{L}_{\kappa}^{-}\big(D^{2}u_{r},Du_{r}\big)\leq \gamma C_{0}r^{2}$~in~$x_{0}+r\Omega$ \text{for} $r,\gamma\in[0,1].$ 
\end{remark}
As we know that H\"{o}lder estimate is a consequence of Harnack's inequality. But simple observation shows that we do not need full Harnack inequality for the proof of H\"{o}lder estimate. In fact, we can derive it from $L^{\epsilon}$ estimate which is going to be strategy here. Because the operator considered here degenerate in nature. In the proof of the interior $L^{\epsilon}$ estimate we need the following ink spot lemma. For its proof see Lemma 2.1\cite{imbert2016estimates}.
\begin{lemma}\label{covering1}
Let $G\subset H\subset B_{1}$ be two open sets. Suppose also that for some $\delta\in(0,1)$ following two assumptions hold:
\begin{enumerate}
\item{} If for any ball $B\subset B_{1}$ satisfying $|B\cap G|>(1-\delta)|B|,$ then $B\subset H.$
\item{}$|G|\leq (1-\delta)|B_{1}|.$
\end{enumerate}
Then $|G|\leq (1-c\delta)|H|$ for some constant depending only on the dimension.
\end{lemma}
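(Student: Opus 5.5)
This is the standard Calderón--Zygmund-type ink spot lemma, and I would prove it with a Vitali covering argument, as in Lemma 2.1 of \cite{imbert2016estimates}.

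\textbf{Step 1: density points.} By the Lebesgue differentiation theorem, almost every $x\in G$ satisfies
\[
\frac{|B_r(x)\cap G|}{|B_r|}\to 1 \quad\text{as } r\to 0 .
\]

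\textbf{Step 2: a critical radius at each such point.} By hypothesis (2), $|G\cap B_1|\le(1-\delta)|B_1|$, so the density ratio is at most $1-\delta$ at the scale of the whole ball. The aim is a maximal radius $r_x$ at which
\[
|B_{r_x}(x)\cap G| = (1-\delta)|B_{r_x}(x)|, \qquad |B_r(x)\cap G|\le(1-\delta)|B_r(x)| \ \text{ for } r>r_x .
\]
To get this, consider the continuous function $r\mapsto |B_r(x)\cap G|/|B_r|$. It tends to $1$ as $r\to 0$. At a large radius, for example the radius at which $B_r(x)\supset B_1$, it is at most $1-\delta$, because $|G|\le(1-\delta)|B_1|\le(1-\delta)|B_r|$. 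By continuity the critical radius $r_x$ exists.

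\textbf{Step 3: the balls lie in $H$.} The balls $B_{r_x}(x)$ may stick out of $B_1$, which is a technical point. I would handle it by restricting to balls contained in $B_1$, or by working with $B\cap B_1$ and using that $B_1$ is a uniformly Lipschitz (chord-arc) domain. Hypothesis (1) then applies to slightly smaller balls $B_{r}(x)$ with $r<r_x$: these have density greater than $1-\delta$, so they lie in $H$. Letting $r\uparrow r_x$ gives $B_{r_x}(x)\subset H$ up to boundary.

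\textbf{Step 4: Vitali covering.} The balls $\{B_{r_x}(x)\}$ cover $G$ up to a null set, and their radii are bounded. Vitali's lemma gives a disjoint subfamily $B_j$ with $G\subset\bigcup 5B_j$ up to measure zero. Each $B_j$ sits in $H$ and satisfies $|B_j\setminus G| = \delta|B_j|$. Using $|5B_j\cap B_1|\le C_n|B_j\cap B_1|$, we obtain
\[
|H\setminus G| \ \ge\ \sum_j |B_j\setminus G| \ =\ \delta\sum_j|B_j| \ \ge\ c\,\delta\,|G| .
\]
Hence $|G|\le (1+c\delta)^{-1}|H|\le(1-c'\delta)|H|$.

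The main obstacle is the boundary issue in Steps 3 and 4: keeping the chosen balls, or their intersections with $B_1$, controlled near $\partial B_1$ while preserving both the density comparison and the doubling constant. I would handle it by working with the intersections $B_r(x)\cap B_1$, whose measure is comparable to $|B_r|$ for all $x\in B_1$ and $r\le 2$.
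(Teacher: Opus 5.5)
Your Step 3 does not go through, and the remedies you sketch for the boundary issue do not repair it. Hypothesis (1) only concerns balls $B\subset B_1$, yet in Step 2 you find the critical radius by letting $B_r(x)$ grow until it contains $B_1$. Nothing prevents $r_x>1-|x|$, and then $B_{r_x}(x)\not\subset B_1$, so $B_{r_x}(x)\subset H$ is simply false. This happens on a large set. Take $H=B_1$, $G=\{1/2<|y|<1\}$ and $\delta\le 2^{-n}$, so that (1) and (2) hold. For $|x|\ge 3/4$, every centered ball $B_r(x)$ with $r\le 1-|x|$ lies inside $G$. Hence the density reaches $1-\delta$ only at some $r_x>1-|x|$, and these points carry at least half of $|G|$, so they cannot be discarded. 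Restricting to centered balls inside $B_1$ leaves such $x$ with no critical radius at all. Hypothesis (1) also says nothing about the sets $B_r(x)\cap B_1$, which are not balls, so the comparability $|B_r(x)\cap B_1|\ge c_n|B_r|$ gives no containment in $H$. A smaller point: to know that radii just below $r_x$ have density $>1-\delta$, you must take the first radius where the density drops to $1-\delta$, not the last.

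The missing idea is that hypothesis (2) must be used on $B_1$ itself, reached through balls that stay inside $B_1$, and these balls cannot all be centered at $x$. For instance, after $B_r(x)$ with $0<r\le 1-|x|$, continue with the nested family $B_{(1-t)(1-|x|)+t}\bigl((1-t)x\bigr)$, $t\in[0,1]$. Each of these balls contains $x$ and lies in $B_1$, and the last one is $B_1$, whose $G$-density is at most $1-\delta$ by (2). Since $G$ is open, the density is $1$ at small scales, so Lebesgue points are not needed. Stop at the first parameter where the density equals $1-\delta$; then (1) and nestedness put the stopping ball in $H$. Vitali's lemma applies to balls not centered at $x$ and gives $|G|\le 5^n\sum_j|B_j|$ directly, with no need for $|5B_j\cap B_1|$.

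The paper itself gives no proof and refers to Lemma 2.1 of \cite{imbert2016estimates}, where the issue never arises because one covers $H$ rather than $G$. For $x\in H$, take a maximal ball $B^x\subset H$ containing $x$. Either $B^x=B_1$ and (2) applies, or $B^x$ can be slightly enlarged inside $B_1$. The enlargement is not contained in $H$, so (1) plus continuity give $|B^x\cap G|\le(1-\delta)|B^x|$. Vitali then yields $|H\setminus G|\ge 5^{-n}\delta|H|$, i.e. $|G|\le(1-5^{-n}\delta)|H|$, with no density points and no boundary issue.
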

\section{Proof of main result}
\subsection{Point to Measure Estimate}
\begin{theorem}\label{measure}
There exist two small constants $k_{0}>0,$ $\delta>0$ and a large constant $M>0,$ such that if $k\leq k_{0},$ then for any lower semicontinuous function $u:B_{1}\longrightarrow\mathbb{R}$ satisfying 
\begin{equation}\label{meac}
\left\{
\begin{aligned}
&(i)&~u\geq0 \quad \text{in}~B_1,\\
&(ii)&~\mathcal{L}_{\kappa}^{-}\big(D^{2}u,Du\big)\leq 1\quad \text{in}~B_1,\\ 
&(iii)&~~|\{u>M\}\cap B_{1}|>(1-\delta)|B_{1}|,\\
\end{aligned}
\right.
\end{equation}
then $u>1$ in $B_{1/4}.$
\end{theorem}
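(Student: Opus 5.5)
We argue by contradiction, using the Imbert--Silvestre point-to-measure method with a sliding cusp, as described in the introduction. Suppose $u(x_0)\le 1$ for some $x_0\in B_{1/4}$. Fix the concave cusp $\eta(x)=-10|x|^{3/2}$ (or a suitable multiple $-a|x|^{3/2}$, with $a$ universal to be chosen). For each vertex $y$ in a set $V\subset B_{1/8}$ with $|V|\ge c|B_1|$ universal, slide the paraboloid-like cusp $\phi_y(x)=C_1+\eta(x-y)$ down from above until it touches $u$ from below in $B_1$, i.e. pick the largest constant $h_y$ with $\phi_y-h_y\le u$. Choose $a$ and $C_1$ so that (a) $\phi_y(x_0)-h_y\le u(x_0)\le 1$ forces $h_y\ge C_1-1-a|x_0-y|^{3/2}$, while (b) on $\partial B_1$ the cusp is below $0\le u$. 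Then the touching point $x$ lies in the interior, and $u(x)\le C_1+ a\cdot(\text{small})\le M$ for a large universal $M$. Hence the contact set $A$, consisting of touching points, lies in $\{u\le M\}\cap B_{1}$ (away from the boundary). Standard arguments (lower semicontinuity plus the $C^{1,1}$ from below at contact points away from the vertex, or a regularization via inf-convolution) let us treat $u$ as punctually second order differentiable a.e. on $A$. The vertex can be recovered from the contact point via the map $T(x)=x-(D\eta)^{-1}(Du(x))$, because $Du(x)=D\eta(x-y)$ determines $x-y$. We must exclude contact exactly at the vertex. We try the trick of replacing $\eta$ near $0$ by a smooth cap, or arguing that touching at $y$ gives no gradient information but happens on a null set of $y$.

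The key step is the Jacobian bound $|\det DT|\le C$ on $A$, by the area formula. This gives $|V|\le C|A|\le C|\{u\le M\}\cap B_1|<C\delta|B_1|$, which contradicts $|V|\ge c|B_1|$ when $\delta$ is small. We compute $DT=I-(D^2\eta(x-y))^{-1}D^2u(x)$, where $D^2u(x)\ge D^2\eta(x-y)$ and $D^2\eta$ is negative definite with eigenvalues comparable to $-a|x-y|^{-1/2}$. Following the decomposition announced in the introduction, split $A=G\cup B$. On $B$ we have $|D^2u|\le\kappa$. Since $|x-y|\le 2$, the matrix $-(D^2\eta)^{-1}$ has norm $\lesssim |x-y|^{1/2}/a\le C/a$, so $\|DT\|\le 1+C\kappa/a$ and $|\det DT|\le (1+C\kappa/a)^n$. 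This is bounded once $\kappa\le k_0$ (any fixed $k_0$ works), with no use of the equation. On $G$, where $|D^2u|>\kappa$, the equation $\mathcal L^-_\kappa\le 1$ is genuinely elliptic. Write $D^2u=D^2\eta+P$ with $P\ge 0$. The equation yields $\lambda\,\mathrm{tr}(D^2u)^+\le 1+\Lambda\,\mathrm{tr}(D^2u)^-+\Lambda|Du|$, and $(D^2u)^-\le (D^2\eta)^-$. Together these bound $\mathrm{tr}\,P$ by $C(1+|D^2\eta|+|D\eta|)\le C|x-y|^{-1/2}$ for $|x-y|\le 2$. Hence $-(D^2\eta)^{-1}P$ has norm $\le C$ and $|\det DT|\le C$, exactly as in the large-gradient argument.

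The main obstacle is the geometric setup on $G$ near the vertex and the precise estimates for the cusp. Since $|D^2\eta|\to\infty$ as $x\to y$, the bound must be scale invariant in $|x-y|$. The computation above is scale invariant because both $P$ and $D^2\eta$ scale like $|x-y|^{-1/2}$, while the lower order terms $1$ and $|Du|\sim|x-y|^{1/2}$ are dominated. A second delicate point is to justify that $T$ is Lipschitz or a.e. differentiable on $A$ so that the area formula applies. We would first try the approach of Imbert--Silvestre: each contact point with its vertex gives a touching cusp from below, which yields a semiconcave-type lower bound and hence Lipschitz dependence of $T$ locally away from the vertex.
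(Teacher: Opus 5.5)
Your proposal is correct and follows essentially the paper's argument: slide the cusp $-10|x|^{3/2}$ from below, confine the contact set to $\{u\le M\}$, and split $A=G\cup B$. On $G$ the equation gives $|D^2u|\lesssim |z-T(z)|^{-1/2}$, on $B$ there is the bare bound $|DT-I|\le C\kappa$, and you conclude with the area formula and the Imbert--Silvestre reduction via inf-convolution. Your remark that only an upper bound on $|\det DT|$ is needed, so any fixed $k_0$ works, is correct and slightly sharper than the paper's insistence on $|DT-I|\le 1/2$.

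The one loose end is contact at the vertex, and it closes as in the paper by taking vertices in $V\subset\{u>M\}\cap B_{1/8}$: contact points satisfy $u\le 1+a(3/8)^{3/2}<M$, so they never coincide with their vertices. Your ``null set'' alternative is not justified as stated; such vertices do, however, lie in $\{u\le M\}$, which has measure less than $\delta|B_1|$, and that also suffices.
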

\begin{proof} Following the ideas of \cite{imbert2016estimates}, we divide the proof in three parts. First we establish the result for classical solution, then for semiconcave and finally lower semicontinuous supersolution.\\
\textbf{Proof when $u$ is $C^{2}$:} Before we proceed for the poof of the lemma, we would like to emphasize that we can use our equation only when hessian is large. Suppose by contradiction for each sufficiently small $\kappa,$ $\delta$ and large $M,$ we can find a point $x_{0}\in B_{\frac{1}{4}}$ such that $u(x_{0})\leq 1.$ Let us set $V=\{x\in B_{\frac{1}{4}}~:~u(x)>M\},$ where $M$ will be chose below. Fix $\bar x\in V$ and define
\[\psi(y)=u(y)-\eta(y-\bar x),\]
where $\eta(y)=-10|y|^{3/2}.$ We look for the point of minimum for the function $\psi.$ As $u\ge0$ and
\[u(x_{0})-\eta(x_{0}-\bar x)\leq 1+10(1/2)^{3/2},\]
while
\[u(y)-\eta(y-\bar x)\geq10(3/4)^{3/2}~~\text{for}~~y\in\partial B_{1}.\]
Minimum of $\psi$ cannot occur on $\partial B_{1}.$ Therefore there exists $z\in B_{1}$ such that
\begin{equation}\label{reffi}
u(z)-\eta(z-\bar x)=\min_{B_{1}}\big(u-\eta(\cdot-\bar x)\big).
\end{equation}
Define the contact set
\[A=\left\{z\in B_{1}:\exists~~\bar x\in V~\text{satisfying}~\eqref{reffi}~\right\}.\]
By minimality,
\[u(z)\le u(x_{0})-\eta(x_{0}-\bar x)\leq1+\frac{5}{\sqrt{2}}.\]
 So if we choose $M>1+\frac{5}{\sqrt{2}},$ then $\mathcal A\subset\{u\leq M\}$ and $\bar{x}\not=z.$ Thus the function $\eta$ is smooth.\\
Therefore at the point of minimum we have
\begin{equation}\label{onel}
 Du(z)-D\eta(z-\bar x)=0.
 \end{equation}
Define
\[T:A\to V,\qquad T(z)=\bar x.\]
With this notation \eqref{onel} can be rewritten as $Du(z)-D\eta(z-T(z))=0.$ Differentiating this equation again we have
\[D^{2}u(z)-D^{2}\eta(z-T(z))(I-DT(z))=0.\]
Rewriting this equation we have
\begin{equation}\label{jaco}
DT(z)=I+(D^{2}\eta(z-T(z)))^{-1}D^{2}u(z).
\end{equation}
In order to get contradiction, we use the Area formula, for this we estimate the Jacobian of $DT(z).$ This will be achieved by using the equation. But in our case we can not use the equation as in \cite{imbert2016estimates} because at the point of minimum we do not have $D^{2}u(z)=D^{2}\eta(z-\bar{x}),$ instead we have $D^{2}u(z)\geq D^{2}\eta(z-\bar{x}).$ This can only help us to get $[D^{2}u(z)]^{-}\leq D^{2}\eta(z-\bar{x})$ which does not give estimate for $|D^{2}u(z)|$ so we can not use the equation directly.\\ 
In order to overcome this issue we divide our contact set into two parts.\\
First part is $G=\{z\in A:|D^{2}u(z)|>\kappa\}$ which is a good set as we can use the equation to estimate the Jacobian of $T$. Other part is $B=\{z\in A:|D^{2}u(z)|\le\kappa\}$
is bad from the point of view that we can not use the equation here. But interesting part is that Jacobian is automatically bounded on this part.\\
\textbf{Case 1: $z\in G$}\\
Since $u+\eta(\cdot-\bar x)$ has a minimum at $z,$ $D^{2}u(z)+D^{2}\eta(z-\bar x)\ge0.$
Consequently,
\[\operatorname{Tr}(D^{2}u(z))^{-}\leq\operatorname{Tr}(D^{2}\eta(z-\bar x)).\]
Note also that $|D^{2}(z)|>\kappa$ therefore, we can use \eqref{meac}(ii), thus we have
\[\lambda\operatorname{Tr}(D^{2}u)^{+}-\Lambda\operatorname{Tr}(D^{2}u)^{-}-\Lambda|Du|\leq1.\]
Using $Du=-D\eta,$ we obtain
\[\operatorname{Tr}(D^{2}u)^{+}\leq C\Big(1+\operatorname{Tr}(D^{2}\eta)+|D\eta|\Big).\]
Observe that
\begin{equation*}
\left\{
\begin{aligned}
&|D\eta|\leq C|z-\bar x|^{1/2}\\
&|D^{2}\eta|\leq C|z-\bar x|^{-1/2}.
\end{aligned}
\right.
\end{equation*}
Using these two expression in the above estimate we get bound of the positive part $D^{2}u(z).$ As we already have control the negative prat of hessian of $u$ in terms of Hessian of $\eta$ as $z$ is a point of minimum. Thus be have 
\[|D^{2}u|\leq C\left(1+|z-\bar x|^{-1/2}\right).\]
Now using $|(D^{2}\eta)^{-1}|\leq C|z-\bar x|^{1/2}$ and above expression in \eqref{jaco}, we get
\[|DT(z)|\leq C.\]
\textbf{Case 2: $z\in B:$}~In this case we have, $|D^{2}u(z)|\le\kappa.$ Consequently, \eqref{jaco} implies 
\begin{equation}\label{jac111}
|DT-I|\le|(D^{2}\eta)^{-1}||D^{2}u|\leq C\kappa. 
\end{equation}
Thus, for any fix $0<k_{0}<\frac1{2C}$  we have
\[|DT-I|\leq\frac{1}{2},\]
as long as $0\le\kappa\le k_{0}.$ Thus we find $|\det DT(z)|\leq\left(\frac32\right)^n$ in this case also. Applying the area formula,
\[|V|\le\int_{A}|\det DT(z)|dz\leq C|A|.\]
Consequently, $|B_{1/4}|-\delta |B_{1}|\le C\delta |B_{1}|.$ Choosing
\[\delta<\frac{|B_{1/4}|}{(C+1)|B_{1}|}\]
gives a contradiction. Therefore $u>1$ in~~$B_{1/4}.$\\
\textbf{Proof of Theorem \eqref{measure} when $u$ is semiconcave function and satisfying \eqref{meac}:}\\
The proof of theorem in this part is further divided into many parts.
\begin{enumerate}
\item~~\textbf{Semiconcavity and second-order differentiability.}\\
In this case we assume that for any choice of $\kappa_{0}, M,\delta,$ there exists a $u:B_{1}\rightarrow\mathbb{R}$ semiconcave function satisfying \eqref{meac} and $\min_{B_{1/4}}u\leq 1.$ As we have assumed that $u$ is semiconcave, so there exists a constant $C_{0}>0$ such that
\[x\longmapsto u(x)-\frac{C_{0}}{2}|x|^{2}\]
is concave in $B_{1}.$ Consequently, for every $x\in B_{1}$ there exists a vector $p\in D^{+}u(x)$ (the superdifferential of $u$ at $x$) satisfying
\begin{equation}\label{SC1}
u(y)\leq u(x)+p\cdot (y-x)+\frac{C_{0}}{2}|y-x|^{2}~~\forall~~y\in B_{1}.
\end{equation}
Furthermore, by Alexandroff theorem the function is twice differentiable almost everywhere. By this we mean, there exists a set $E\subset B_{1}$ with $|E|=0$ and for every point of $x\in B_{1}\setminus E$  the function $u$ is twice differentiable in the classical sense at $x$. More precisely, for every $x\in B_{1}\setminus E$ there exists a symmetric matrix $D^{2}u(x)$ satisfying
\[u(y)=u(x)+Du(x)\cdot(y-x)+\frac12\langle D^{2}u(x)(y-x),\,y-x\rangle+o(|y-x|^{2})\]
as $y\to x$. Furthermore by following \cite{imbert2016estimates}, we know that the semiconcavity implies the following first-order expansion
\begin{equation}\label{gradexp}
Du(y)=Du(x)+D^{2}u(x)(y-x)+o(|y-x|).
\end{equation}
In the above expression whenever $u$ is not differentiable at $y,$ $Du(y)$ denotes any element of the superdifferential $D^{+}u(y).$ Identity \eqref{gradexp} shows that the gradient is differentiable
almost everywhere with derivative $D^{2}u$. Therefore every computation involving $Du$, $D^{2}u$ and the transport map will be carried out on the full measure set $B_{1}\setminus E.$
\\
\item{} \textbf{Construction of the contact set:} As we have started by assuming that there exists a point $x_{0}\in B_{\frac{1}{4}}$ such that $u(x_{0})\leq1.$ Let us set
\[V:=\{x\in B_{\frac14}:u(x)>M\},\]
where the constant $M>1$ will be chosen later. Form the given condition we have
\[|V|\geq|B_{\frac14}|-\delta|B_{1}|,\]
consequently, if we choose $\delta$ sufficiently small then $V$ has positive measure. For each point in $\bar x\in V,$
 define
\[\Psi_{\bar x}(y):=u(y)-\eta(y-\bar x)~~\text{for}~~y\in B_{1},\]
where $\eta(x)=-10|x|^{\frac32}.$ Notice that $\Psi_{\bar{x}}$ is a lower semicontinuous function therefore, there exists a point
$z=z(\bar x)\in\overline{B_{1}}$ such that
\begin{equation*}\label{eq:contact-minimum}
\Psi_{\bar x}(z)=\min_{y\in B_{1}}\Psi_{\bar x}(y).
\end{equation*}
We claim that $z$ lies in the interior of $B_{1}.$ In fact, for $z\in B_{1}$ we have 
\[
\begin{aligned}
u(z)-\eta(z-\bar x)&\leq u(x_{0})-\eta(x_{0}-\bar x)\\
&\leq1+10|x_{0}-\bar x|^{3/2}=1+\frac{5}{\sqrt{2}},
\end{aligned}
\]
 where we have used  $|x_{0}-\bar x|\leq 1/2.$  On the other hand for $z\in\partial B_{1}$ we have 
\[
\begin{aligned}
u(z)-\eta(z-\bar x)&\ge10\left(\frac34\right)^{3/2}\\
&=\frac{15\sqrt3}{4},
\end{aligned}
\]
where we have used $|z-\bar x|\geq3/4$ and $u\geq 0.$ As $1+\frac5{\sqrt2}<
\frac{15\sqrt3}{4},$ so $z\in B_{1}.$ Moreover by choosing $M>
1+\frac5{\sqrt2},$ we find that $z\not=\bar{x}.$  Consequently,
the cusp function
\[y\longmapsto\eta(y-\bar x)\]
is of class $C^{2}$ in a neighbourhood of the contact point.
\[A:=\left\{z\in B_{1}:\exists~\bar x\in V
~\text{such that}~u(z)-\eta(z-\bar x)=\min_{y\in B_{1}}
\left(u(y)-\eta(y-\bar x)\right)\right\}.\]
As $u(z)<M,$ so we have
\[|A|\le|\{u\le M\}\cap B_{1}|\le\delta|B_{1}|.\]
\item{}\textbf{Differentiability at the contact points and the transport map:}
Let $z\in A$ be an arbitrary contact point and $\bar x\in V$ be the corresponding point such that
\begin{equation}\label{contact-point}
u(z)-\eta(z-\bar x)=\min_{y\in B_{1}}\left\{u(y)-\eta(y-\bar x)\right\},
\end{equation}
In view of \eqref{contact-point} and $z\not=\bar{x}$, it is easy to see that $\phi(y):=u(z)+\eta(z-\bar x)-\eta(y-\bar x)$ is $C^{2}$ function touching $u$ from below at $z.$ As, $u$ is a semiconcave function, therefore $u$ is differentiable at $z.$ Thus we find that $\Psi(y)=u(y)-\eta(y-\bar x)$ is a differentiable function and attains its minimum at $z,$ so we have
\begin{equation}\label{tref}
Du(z)-D\eta(z-\bar x)=0.
\end{equation}
Substituting the expression of $D\eta$ we get  $Du(z)=-15|z-\bar x|^{-1/2}(z-\bar x).$ Notice also that $x\to x\sqrt{|x|}$ is one to one map on $\mathbb{R}^{n}\setminus\{0\}$ consequently, touching vertex associated with a fixed contact point is unique. Thus we can define
\[T:A\longrightarrow V~~~~\text{by}~~~~T(z)=\bar x.\]
With this notation, \eqref{tref} can be rewritten as $Du(z)=D\eta\bigl(z-T(z)\bigr)~~\text{for}~~z\in A.$
In order to calculate the second derivative of $\eta$ we need  $z\not=T(z).$ In fact, it is easy to see that there exits a universal constant $\varepsilon_{0}>0$ depending on modulus of continuity and $M$ such that
\[|z-T(z)|\ge\varepsilon_{0}.\]
Indeed, suppose by contradiction that for each positive $\epsilon,$  $|z-T(z)|\leq\varepsilon.$ Then we have 
\[\omega(|z-T(z)|)\geq u(T(z))-u(z)\geq M-\left(1+\frac{5}{\sqrt2}\right)=\mu,\]
where $\omega$ is a modulus of continuity of $u.$ So if we choose $\epsilon_{0}$ sufficiently small such that $\omega(\epsilon_{0})<\mu/2$ then we get a contradiction to the fact that $\mu\geq\mu/2$ provided $|T(z)-z|\leq\epsilon_{0}.$ Thus the claim follows.\\
In view of above observation and the choice of $\eta$ we have 
\[\frac{15}{2\sqrt2}I\leq D^{2}\eta(z-T(z))\leq15\varepsilon_{0}^{-1/2}I.\]
Thus every eigenvalues of $D^{2}\eta(z-T(z))$ bounded below and we have
\begin{equation*}\label{inverse-bound}
\left|\left(D^{2}\eta(z-T(z))\right)^{-1}\right|\leq\frac{1}{c}.
\end{equation*}
\item{}\textbf{Lipschitz continuity of $Du$ on $A$:}~ As remarked above that there is a $\epsilon_{0}>0$ such that 
$|x-y|>\epsilon_0$ for any pair of vertex and contact point. Therefore, we find that $|D^{2}\eta(z)|\leq C|z-\bar{x}|^{-\frac{1}{2}}\leq C\epsilon_{0}^{-1/2}$ for $|x-y|>\epsilon_{0}.$ Let $(x_{1},z_{1})$ and $(x_{2},z_{2})$ be the pair of points satisfying \eqref{contact-point}. Set $s=2|z_{1}-z_{2}|$ and observe that for any $z\in B_{s}(z_{1})$ we have 
\begin{equation}\label{fil1}
\begin{aligned}
u(z)&\geq \eta(z-x_{1})\geq \xi(z_{1}-x_{1})+\langle D\eta(z_{1}-x_{1}),(z-z_{1})\rangle-Cs^{2}\epsilon_{0}^{-\frac{1}{2}}\\
=& u(z_{1})+\langle Du(z_{1}),(z-z_{1})\rangle-Cs^{2}\epsilon_{0}^{-\frac{1}{2}},
\end{aligned}
\end{equation}
where we have used the bound of $D^{2}\eta$ and \eqref{contact-point}. Consequently, for $z=z_{2}$ we have 
\begin{equation*}
u(z_{2})\geq u(z_{1})+\langle Du(z_{1}),(z_{2}-z_{1}) \rangle-Cs^{2}\epsilon_{0}^{-\frac{1}{2}}.
\end{equation*}
Interchanging the role of $z_{1}$ and $z_{2}$ we find
\begin{equation}\label{fil2}
u(z_{1})\geq u(z_{2})+\langle Du(y_{2}),(y_{1}-y_{2}) \rangle-Cs^{2}\epsilon_{0}^{-\frac{1}{2}}.
\end{equation}
From \eqref{fil1} and \eqref{fil2} we find 
\[u(z)\geq u(z_{2})+Du(z_{2}).(z_{1}-z_{2})+\langle Du(z_{1}),(z-y_{1})\rangle-C\epsilon_{0}^{-\frac{1}{2}}s^{2}.\]
On the other hand from \eqref{SC1}, we also have 
\[u(z)\leq u(z_{2})+\langle Du(z_{2}),(z-z_{2})\rangle+Cs^{2}.\]
Now by subtracting above two inequalities 
\[\langle Du(z_{1}-Du(z_{2}))(z-z_{1})\rangle\leq C(1+\epsilon_{0}^{-\frac{1}{2}})s^{2}.\]
Due to arbitrariness of $z\in B_{s}(z_{1})$ we conclude that 
\[|Du(z_{1})-Du(z_{2})|\leq C(1+\epsilon_{0}^{-\frac{1}{2}})s.\]
Thus we find that $Du$ is Lipschitz continuous but $[Du]_{Lip}$ is not universal as it depends on $\epsilon_{0}.$\\
\textbf{The map $T:A\rightarrow V:$}~As we mentioned above $u$ is differentiable at the contact point because $u$ is semiconcave. Moreover, the contact point and vertex are not same so $\eta$ is also differentiable at the contact point and 
\[Du(z)=D\eta(z-\bar{x})=-15(z-\bar{x})|z-\bar{x}|^{-\frac{1}{2}},\]
uniquely determines the value of $z-\bar{x}.$ In particular, for each $z\in V$ there is a unique point $x\in A.$ This defines a map $T:A\rightarrow V$ which is implicitly given by  $Du(z)=D\eta(z-T(z)).$
This in turn implies that $DT(z)=z-(D^{2}\eta(z-T(z)))^{-1}(Du(z)),$ where $(D\eta)^{-1}$ is the inverse of the function $D\eta:\mathbb{R}^{n}~\rightarrow~\mathbb{R}^{n}.$ Following the same calculation as on the page 1330\cite{imbert2016estimates} and replacing the power of $|z-\bar{x}|$ appropriately we have 
\begin{equation*}\label{incon}
|V|=|T(A)|=\int_{A}|\det DT(z)|dz.
\end{equation*}
By following the idea as in the classical case we want to get a contradiction. But up to now we have estimate on the Jacobian which is not universal. In the next step we get a universal estimate for Jacobian map. This is the place where we use our idea to break the contact set into good and bad parts.
\item{}\textbf{Universal Estimate on $DT$:} Recall that the Jacobian $DT,$ is given by
\begin{equation}\label{chanv}
   DT(z)=z-(D^{2}\eta(z-T(z)))^{-1}(D^{2}u(z)) 
\end{equation}
We achieve this by using \eqref{meac}(ii). As mentioned in Step 0, $u$ is pointwise twice differentiable except on a set $E$ of measure zero. In particular if $z\in A,$ we have 
\begin{equation}\label{mean2}
 \mathcal{L}_{\kappa}^{-}\big(D^{2}u(z),Du(z)\big)\leq 1. 
\end{equation}
Note that at contact point we have
\begin{equation}\label{aou}
\left\{
\begin{aligned}
&Du(z)=D\eta(z-T(z)),\\
&D^{2}u(z)\geq D^{2}\eta(z-T(z)).
\end{aligned}
\right.
\end{equation}
In order to use \eqref{mean2}, we have to ensure $|D^{2}u|\leq \kappa,$ which is not possible from \eqref{aou}. At this point we we follow the same approach as in the classical case, that is, we consider two possibilities 
\[
\left\{
\begin{aligned}
&G=\{z\in A~~|~~|D^{2}u(z)|>\kappa\},\\
&B=\{z\in A~|~|D^{2}u|\leq \kappa\},
\end{aligned}
\right.
\]
where $\kappa$ is chosen below, see \eqref{chnab}.  Suppose first that $z\in G.$ As $u-\eta(\cdot-T(\cdot))$ attain its minimum at $z$ so we have 
\[D^{2}u(z)-D^{2}\eta(z-T(z))\ge0.\]
Thus 
\begin{equation}\label{negative}
 \text{Tr}(D^{2}u(z))^{-}\leq -D^{2}\eta(z-T(z)).   
\end{equation}
Using \eqref{mean2}, we have 
\[\lambda\operatorname{Tr}\bigl(D^{2}u(z)\bigr)^{+}-\Lambda\operatorname{Tr}\bigl(D^{2}u(z)\bigr)^{-}-\Lambda|Du(z)|\le1.\]
Hence,
\begin{equation}\label{positive}
\operatorname{Tr}\bigl(D^{2}u(z)\bigr)^{+}\leq C\big(1+\operatorname{Tr}\bigl(D^{2}u(z)\bigr)^{-}+|Du(z)|\big).  \end{equation}
By \eqref{negative}, \eqref{positive} and $|D^{2}\eta(z-T(z))|\leq C|z-T(z)|^{-1/2}$ we have 
\begin{equation*}
|D^{2}u(z)|\leq C\big(1+|z-T(z)|^{-\frac{1}{2}}\big).
\end{equation*}
Now using \eqref{chanv} and $|D^{2}\eta(z-T(z))^{-1}|\leq C|z-T(z)|^{\frac{1}{2}}$ we find that 
\[|DT(z)|\leq 1+C|z-T(z)|^{\frac{1}{2}}\big(1+|z-T(z)|^{-\frac{1}{2}}\big).\]
Thus we have $|DT(z)|\leq C.$\\
Next we assume that $z\in B$ so we have $|D^{2}u(z)|\leq\kappa,$ therefore we can not use \eqref{mean2}. But observe that \eqref{chanv} can be rewritten as follow:
\[DT(z)-I=[D^{2}\eta(z-T(z))]^{-1}D^{2}(z).\]
By using $|D^{2}\eta(z-T(z))^{-1}|\leq C|z-T(z)|^{1/2},$  $|D^{2}u(z)|\leq\kappa$ and $|z-T(z)|\leq \sqrt{2}$ we get
\begin{equation}\label{chnab}
|DT(z)-I|\leq C\kappa\sqrt{2}.
\end{equation}
Now, choosing $\kappa_{0}<[4C\sqrt{2}]^{-1},$ which is universal constant. So for any $0\leq\kappa\leq\kappa_{0},$ we have 
\[|DT(z)-I|\leq\frac{1}{2}.\]
Thus we find that all the eigenvalues of $DT(z)$ lies in $[1/2,3/2].$ Thus we have 
\begin{equation*}
|\text{Det}DT(z)|\leq \big(3/2\big)^{n}.  
\end{equation*}
Thus we got universal bound on the Jacobian of $T.$ Then we can find the contradiction as in the classical case.
\end{enumerate}
\textbf{Proof of Theorem \eqref{measure} when $u$ is lower semicontinuous function and satisfying \eqref{meac}:} This step follows on the same line as Proposition 3.4\cite{imbert2016estimates} so we do not write the details here.
\end{proof}
\subsection{Barrier function}
This section deal with the construction of barrier will be used in the next section. Let us consider the following $\rho(x)=|x|^{-\beta}.$ For $x\not=0,$ this function is differentiable and 
\begin{equation*}
\left\{
\begin{aligned}
&D\rho(x)=-\beta|x|^{-\beta-2}x,\\ 
&D^{2}\rho(x)=|x|^{-\beta-2}\Big[\beta(\beta+2)x\otimes x|x|^{-2}-\beta I\Big].\\
\end{aligned}
\right.
\end{equation*}
Observe that the eigenvalues of $D^{2}\rho(x)$ are $\beta(\beta+2)|x|^{-\beta-2}$ and $-\beta|x|^{-\beta-2}$ and  with multiplicity $1$ and $n-1$ respectively. Note that for any $x\in B_{2}\setminus\{0\}$ and large $\beta$  we have $|D^{2}\rho(x)|\geq\beta|x|^{-\beta-2},$ where for the matrix norm we have taken the maximum of the eigenvalue. Therefore, by choosing $\beta$ sufficiently large we can make the Hessian as large as we need. So in the following line we compute as if there is no restriction on the Hessian 
\begin{equation*}
\begin{aligned}
\mathcal{L}_{\kappa_{0}}^{-}\big(D^{2}\rho,D\rho\big)&=\lambda\beta(\beta+1)|x|^{-\beta-2}-\Lambda(n-1)\beta |x|^{-\beta-2}-\Lambda\beta|x|^{-\beta-1}\\
&=\beta|x|^{-\beta-2}\Big(\lambda(\beta+1)-\Lambda(n-1)-\Lambda|x|\Big)\\
&\geq\beta |x|^{-\beta-2}\Big(\lambda(\beta+1)-\Lambda(n-1)-\Lambda R_{0}\Big)
\geq |x|^{-\beta-2},
\end{aligned} 
\end{equation*}
for $x\in B_{R_{0}}\setminus\{0\}.$
\begin{lemma}\label{com}
There exists a small constant $\kappa_{0}>0$ depending on $\lambda,\Lambda$ and $n$ such that if $u\geq0$ is a supersolution of $\mathcal{L}_{\kappa}^{-}\big(D^{2}u,Du\big)\leq 1$ in $B_{2}$ and $u>M$ in $B_{1/4}$ for "some" large constant $M,$ then $u>1$ in $B_{1}.$ 
\end{lemma}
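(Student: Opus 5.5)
We will prove Lemma \ref{com} by a comparison argument with a barrier built from the function $\rho(x)=|x|^{-\beta}$ of the preceding computation. This is the standard way of passing from a lower bound on a small ball to a lower bound on a larger one. Set $\Phi(x)=c_{1}\big(|x|^{-\beta}-2^{-\beta}\big)$ on the annulus $D=B_{2}\setminus \overline{B}_{1/4}$. The constant $c_{1}>0$ is chosen so that $\Phi>1$ on $\overline B_{1}\setminus B_{1/4}$; since $\Phi$ is radially decreasing, it suffices that $c_{1}(1-2^{-\beta})>1$. Then $M$ is taken larger than $\max_{\partial B_{1/4}}\Phi=c_{1}(4^{\beta}-2^{-\beta})$. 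With these choices $\Phi\le 0\le u$ on $\partial B_{2}$ and $\Phi<M<u$ on $\partial B_{1/4}$, so $u\ge \Phi$ on $\partial D$.

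\textbf{Step 1 (the barrier is a strict subsolution where the equation is active).} By the computation before the lemma, once $\beta$ is large universal, $\mathcal L^{-}(D^{2}\Phi,D\Phi)\ge c_{1}|x|^{-\beta-2}\ge c_{1}2^{-\beta-2}$ on $D$ (with $R_{0}=2$). We also want $|D^{2}\Phi|\ge c_{1}\beta|x|^{-\beta-2}\ge c_{1}\beta 2^{-\beta-2}>\kappa$ on $D$, so that the operator is in its elliptic regime at $\Phi$. Both requirements are met by taking $c_{1}$ large or $\kappa\le\kappa_{0}$ small, with $\kappa_{0}=\beta 2^{-\beta-2}$ universal. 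We then enlarge $c_{1}$ until $c_{1}2^{-\beta-2}>1$, which gives $\mathcal L^{-}_{\kappa}(D^{2}\Phi,D\Phi)>1$ in $D$.

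\textbf{Step 2 (comparison).} Suppose $\min_{\overline D}(u-\Phi)<0$. Since $u$ is lower semicontinuous and $u\ge\Phi$ on $\partial D$, the negative minimum is attained at some interior point $x_{1}\in D$. Then $\Phi+\min(u-\Phi)$ is a $C^{2}$ function touching $u$ from below at $x_{1}$. The viscosity supersolution property gives $\mathcal L^{-}_{\kappa}(D^{2}\Phi(x_{1}),D\Phi(x_{1}))\le 1$. Because $|D^{2}\Phi(x_{1})|>\kappa$, this is the genuine Pucci-type expression, so it contradicts Step 1. The key point is that the degeneracy never interferes: the test function is the barrier itself, whose Hessian is large everywhere in $D$, so the $-\infty$ branch of $\mathcal L^{-}_\kappa$ is never activated. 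Hence $u\ge\Phi>1$ on $\overline B_{1}\setminus B_{1/4}$, and $u>M>1$ on $B_{1/4}$, so $u>1$ in $B_{1}$.

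\textbf{Main obstacle.} The main difficulty is the bookkeeping of constants. One point is that the computation before the lemma uses $\lambda(\beta+1)$ where the eigenvalue is $\beta(\beta+2)$; the corrected version only helps, and $\beta$ is fixed large depending on $n,\lambda,\Lambda$. The other point is the ordering of the choices: $\beta$ first, then $c_{1}$, then $M$ (the "some" large $M$ in the statement), and $\kappa_{0}$ can be taken to be any universal number not exceeding $c_{1}\beta 2^{-\beta-2}$. If the viscosity definition in use requires the strict inequality to be checked with the $\pm\infty$ convention, we will instead compare with $\Phi-\varepsilon$ and let $\varepsilon\to0$.
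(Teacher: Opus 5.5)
Your proposal is correct and follows the paper's proof: a large multiple of $|x|^{-\beta}-2^{-\beta}$ is a classical strict subsolution whose Hessian stays above $\kappa$ on $B_{2}\setminus\overline{B}_{1/4}$, and comparing it with $u$ there gives $u>1$ in $B_{1}$; the paper takes the multiplier to be $2^{-2\beta-1}M$ rather than fixing $c_{1}$ first, and it cites ``the comparison principle'' where you argue directly by touching at an interior minimum, which is the better justification for this degenerate operator. Two small points: first, for a merely lower semicontinuous $u$ the hypothesis $u>M$ in the open ball $B_{1/4}$ does not give $u\ge\Phi$ on $\partial B_{1/4}$, so either use continuity of $u$ (as in Theorem~\ref{corols}) or move the inner boundary to a radius $r<1/4$ with $M>c_{1}(r^{-\beta}-2^{-\beta})$; second, the radial eigenvalue of $D^{2}|x|^{-\beta}$ is $\beta(\beta+1)|x|^{-\beta-2}$ (the $\beta(\beta+2)$ in the paper is the coefficient of $x\otimes x|x|^{-2}$ before subtracting $\beta I$), so the computation you quote is already exact and needs no correction.
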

\begin{proof}
Let us consider the following auxiliary function
\[A(x):=M\frac{|x|^{-\beta}-2^{-\beta}}{2^{2\beta+1}}\]
Notice that the eigenvalues of $D^{2}A(x)$ outside the origin are $2^{-2\beta-1}M~|x|^{-\beta-2}\beta(\beta+1)$ and $-M2^{-2\beta-1}\beta|x|^{-\beta-2}$ with the multiplicities $1$ and $N-1$ respectively. So we can choose $M\geq1$ sufficiently large such that both $A$ and $|D^{2}A(x)|\geq k$ in $B_{1}.$ Note that this function is smooth in $B^{c}_{1/4}$ and $u=0$ on $\partial B_{2}$ and $A<M$ on $\partial B_{1/2}.$ Moreover, we have 
\begin{equation*}
\begin{aligned}
\mathcal{L}_{\kappa\Gamma r^{2}}^{-}\big(D^{2}A, DA\big)\geq& \frac{M}{2^{2\beta+1}}\mathcal{L}_{\kappa\Gamma r^{2}}^{-}(D^{2}\rho, D\rho)\\
\geq &\frac{M}{2^{3\beta+3}}\beta \geq 2~~\text{for}~M~\text{large}.
\end{aligned} 
\end{equation*}
Therefore, by comparison principle we have $u\geq A\geq 1$ in $B_{1}.$  In addition, for $\epsilon=\min_{B_{1/4}}\big(u/M-1\big)$ we have $u\geq (1+\epsilon)M>1$ in $B_{1}.$
\end{proof}
\begin{theorem}\label{corols}
There exist two small constants $k_{0}>0,$ $\delta>0$ and a large constant $M>1,$ such that if $k\leq k_{0},$ then for any continuous function $u:B_{2}\longrightarrow\mathbb{R}$ satisfying:
\begin{equation*}
\left\{
\begin{aligned}
u&\geq0~\text{in}~B_{2},\\
\mathcal{L}_{\kappa}^{-}\big(D^{2}u, Du\big)&\leq 1~\text{in}~B_{2},\\ 
|\{u>M\}\cap B_{1}|&>(1-\delta)|B_{1}|,\\
\end{aligned}
\right.
\end{equation*}
we have $u>1$ in $B_{1}.$
\end{theorem}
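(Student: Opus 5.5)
The plan is to combine the point-to-measure estimate of Theorem \ref{measure} with the barrier of Lemma \ref{com}. Theorem \ref{measure} gives positivity on a small ball, and Lemma \ref{com} spreads that positivity out to $B_{1}$. The difficulty is that Theorem \ref{measure} is stated on $B_{1}$ and concludes on $B_{1/4}$. Here the density hypothesis is on $B_{1}$ and the conclusion is also on $B_{1}$, so the thresholds have to be chained through intermediate constants.

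First I apply Theorem \ref{measure} directly to $u$ restricted to $B_{1}$. Let $k_{0}^{(1)},\delta^{(1)},M^{(1)}$ be the constants it provides. Conditions (i) and (ii) of \eqref{meac} hold because $u\ge 0$ and $\mathcal{L}_{\kappa}^{-}(D^{2}u,Du)\le 1$ in $B_{2}\supset B_{1}$. Condition (iii) holds as soon as $\delta\le\delta^{(1)}$ and $M\ge M^{(1)}$. The conclusion is $u>1$ in $B_{1/4}$.

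Lemma \ref{com}, however, needs $u>M^{(2)}$ in $B_{1/4}$, where $M^{(2)}$ is the large constant coming from the barrier. To reach this level I rescale vertically. Set $v=u/M^{(2)}$. Then $v\ge 0$, and by the scaling property in Remark \ref{remsc} with $\gamma=1/M^{(2)}\le 1$ and $r=1$, we get $\mathcal{L}_{\kappa}^{-}(D^{2}v,Dv)\le 1/M^{(2)}\le 1$. Here I also use the ordering relation: the threshold $\gamma\kappa$ is at most $\kappa$, so the $\kappa$-operator is dominated. Moreover
\[
\{v>M^{(1)}\}=\{u>M^{(1)}M^{(2)}\}.
\]
So I take $M:=M^{(1)}M^{(2)}$, $\delta:=\delta^{(1)}$ and $k_{0}:=\min(k_{0}^{(1)},k_{0}^{(2)})$, where $k_{0}^{(2)}$ is the constant of Lemma \ref{com}. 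Theorem \ref{measure} applied to $v$ then gives $v>1$ in $B_{1/4}$, that is, $u>M^{(2)}$ in $B_{1/4}$. Lemma \ref{com}, applied in $B_{2}$ with $u\ge 0$ a supersolution there, then yields $u>1$ in $B_{1}$.

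The main obstacle is justifying the vertical rescaling. The operator is degenerate: dividing by $M^{(2)}$ shrinks the Hessian, and with it the region where ellipticity is available. One must check that $v$ is a supersolution of an operator with threshold $\kappa/M^{(2)}\le\kappa$, and that Theorem \ref{measure} tolerates this. Both follow from Remark \ref{remsc} and the monotonicity $\mathcal{L}^{-}_{\kappa_{1}}\le\mathcal{L}^{-}_{\kappa_{2}}$ for $\kappa_{1}\ge\kappa_{2}$. The point is that Theorem \ref{measure} holds for every $\kappa\le k_{0}$, including the smaller one, so its constants do not deteriorate.

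A second point to check is that Lemma \ref{com} is genuinely used with $u$ defined on $B_{2}$. Its comparison argument needs $u\ge 0=A$ on $\partial B_{2}$, which the hypothesis $u\ge 0$ gives. It also needs $u>A$ on $\partial B_{1/4}$, which is exactly what the first step provides.
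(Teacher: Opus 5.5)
Your argument is correct and matches the paper's proof. You rescale to $u/M^{(2)}$ (the paper's $w=u/M_{2}$), use Remark~\ref{remsc} and the monotonicity in $\kappa$ so that Theorem~\ref{measure} applies with $M=M^{(1)}M^{(2)}$, obtain $u>M^{(2)}$ in $B_{1/4}$, and conclude with Lemma~\ref{com}. Your initial direct application of Theorem~\ref{measure} to $u$ is redundant and can be dropped.
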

\begin{proof}
Let $M_{1}$ and $M_{2}$ be the constants from Lemmas \ref{measure}, \ref{com}, respectively. Then the function $w=u/M_{2}$ satisfies the assumption of Lemma \ref{measure} for $M_{2}\geq 1,$ in view of Remark \ref{remsc}. So we conclude that $w>1$ in $B_{1/4}$ i.e $u>M_{2}$ in $B_{1/4}.$ Finally, by applying Lemma \ref{com}, we get $u>1$ in $B_{1}.$ 
\end{proof}
In order to get the $L^{\epsilon}$ estimate we also need the following scaled version of the above lemma. 
\begin{corollary}\label{scaler}
There exist two small constants $k_{0}>0$ $\delta>0$ and a large constant $M>1,$ such that if $k\leq k_{0},$ then for any $r\leq1,$ $L\geq 1$ and any upper semi continuous function $u:\overline{B}_{r}\longrightarrow\mathbb{R}$ satisfying 
\begin{equation*}
\left\{
\begin{aligned}
u&\geq0~~~\text{in}~~B_{r},\\
\mathcal{L}_{\kappa}^{-}\big(D^{2}u, Du\big)&\leq L~~~\text{in}~~B_{r},\\ 
|\{u>LM\}\cap B_{r/2}|&>(1-\delta)|B_{r/2}|,\\
\end{aligned}
\right.
\end{equation*}
we have $u>L$ in $B_{r/2}.$
\end{corollary}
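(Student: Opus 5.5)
The plan is to reduce Corollary \ref{scaler} to Theorem \ref{corols} by rescaling. Given $u$ on $\overline{B}_r$ satisfying the hypotheses, I would define
\[
w(x)=\frac{1}{L}\,u\Big(\frac{r}{2}x\Big),\qquad x\in B_{2}.
\]
This is the transformation in the scaling property of Section 2 with $\gamma=1/L\in(0,1]$ and scale factor $r/2\le 1$. Three things need checking.

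\textbf{Nonnegativity and the measure condition.} Clearly $w\ge 0$ in $B_2$. Since the map $x\mapsto \frac{r}{2}x$ sends $B_1$ onto $B_{r/2}$ and rescales Lebesgue measure by the constant factor $(r/2)^n$, we have
\[
|\{w>M\}\cap B_1|=(2/r)^n\,|\{u>LM\}\cap B_{r/2}|>(1-\delta)|B_1|.
\]

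\textbf{The equation.} By the chain rule, $D w=\frac{r}{2L}Du$ and $D^2w=\frac{r^2}{4L}D^2u$. Hence, on the set where the operator is finite,
\[
\mathcal{L}^{-}(D^2w,Dw)=\frac{r^2}{4L}\Big(\lambda\operatorname{Tr}(D^2u)^{+}-\Lambda\operatorname{Tr}(D^2u)^{-}\Big)-\Lambda\frac{r}{2L}|Du|.
\]
Because $r/2\le 1$, we have $\frac{r}{2}\ge\frac{r^2}{4}$, so the gradient term is at least as negative as it would be under pure $r^2/4$-scaling. This gives $\mathcal{L}^{-}(D^2w,Dw)\le \frac{r^2}{4L}\,\mathcal{L}^{-}(D^2u,Du)$. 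Since $\mathcal{L}^-(D^2u,Du)\le L$, the right-hand side is at most $r^2/4\le 1$.

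The thresholds also have to be handled. The Hessian degeneracy threshold becomes $\kappa' = \kappa r^2/(4L)\le\kappa\le k_0$. The set where $\mathcal{L}^-$ equals $-\infty$ for $w$ corresponds exactly to the set $|D^2u|\le \kappa$, where the inequality holds trivially. The monotonicity relation in item (2) of Section 2, summarized in Remark \ref{remsc}, lets me replace $\kappa'$ by any $\kappa''\in[\kappa',k_0]$. In the viscosity sense, I would verify all this by pulling a test function $\phi$ touching $w$ from below back to $L\phi(2\cdot/r)$ touching $u$ from below, and applying the same computation to $\phi$.

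\textbf{Conclusion.} Theorem \ref{corols} with constants $k_0,\delta,M$ now applies to $w$ and gives $w>1$ in $B_1$. That is, $u>L$ in $B_{r/2}$, with the same $k_0,\delta,M$ as in Theorem \ref{corols}, independent of $r$ and $L$.

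The main obstacle I expect is the bookkeeping of the Hessian threshold under rescaling. Dividing by $L$ and shrinking by $r$ makes the degeneracy region $\{|X|\le\kappa'\}$ smaller, and one must check that this is harmless. That is, a solution for threshold $\kappa$ must remain a solution for the new threshold after scaling, and the new threshold must still lie below $k_0$. This is exactly what item (2) and Remark \ref{remsc} are meant to provide, and I would cite them at that point. A minor point is the semicontinuity convention. The statement says upper semicontinuous, but supersolutions should be lower semicontinuous as in Theorem \ref{measure}, and I would read it that way.
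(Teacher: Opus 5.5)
Your proposal is correct and follows the paper's proof. It uses the same rescaling $w(x)=L^{-1}u(\tfrac{r}{2}x)$, notes that the new threshold $\kappa r^{2}/(4L)\le\kappa$ only strengthens the supersolution condition by monotonicity in $\kappa$, and then applies Theorem \ref{corols} directly; your explicit checks of the gradient term (via $r/2\ge r^{2}/4$) and of the measure condition fill in details the paper leaves implicit, and your semicontinuity remark points to a mismatch with Theorem \ref{corols} (stated for continuous functions) that the paper's own proof shares.
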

\begin{proof}
Consider the scaled function $u_{r}(x)=L^{-1}u(2^{-1}rx)$ then its satisfies 
\[\mathcal{L}_{\kappa r^{2}(4L)^{-1}}^{-}\big(D^{2}u_{r},Du_{r}\big)\leq \frac{r^{2}}{4}\leq 1~~\text{in}~B_{2}.\]
As mentioned in Remark \ref{order}, $u_{r}$ satisfies a stronger condition as $(4L)^{-1}r^{2}\kappa$ is smaller than $\kappa$ for $r\leq 1$ and $L\geq 1.$ Now we can apply Theorem \ref{corols} to get the result.
\end{proof}
\subsection{}\textbf{$\bm{L^{\epsilon}-}$estimate}
\begin{lemma}\label{lepaa}
There exist two small $\kappa_{0},$ $\epsilon>0$ such that if $\kappa\leq \kappa_{0},$ then for any lower semi continuous function $u: B_{2}\longrightarrow\mathbb{R}$ such that
\begin{equation}\label{lepsilon}
\left\{
\begin{aligned}
u&\geq0~~\text{in}~~B_{2},\\
\mathcal{L}_{\kappa}^{-}\big(D^{2}u,Du\big)&\leq 1~~\text{in}~~B_{2},\\ 
\inf_{B_{1}}u&\leq 1,\\
\end{aligned}
\right.
s\end{equation}
we have 
\[|\{u>t\}\cap B_{1}|\leq \tilde{C}t^{-\epsilon}~~\text{for~all}~~t>0.\]
\end{lemma}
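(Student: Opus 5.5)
The plan is the standard iteration argument of Imbert--Silvestre: show geometric decay of the super-level sets and then read off a power law. Let $M>1$, $\delta>0$, $\kappa_{0}$ be the constants of Corollary \ref{scaler}. Set
\[
G_{k}=\{u>M^{k}\}\cap B_{1},\qquad H_{k}=\{u>M^{k-1}\}\cap B_{1}.
\]
The goal is the decay
\[
|G_{k}|\leq (1-c\delta)^{k}|B_{1}|\qquad\text{for all }k\geq 1,
\]
with $c$ the dimensional constant of Lemma \ref{covering1}. Given this, for $t\geq M$ choose $k$ with $M^{k}\leq t<M^{k+1}$. This yields $|\{u>t\}\cap B_{1}|\leq C(1-c\delta)^{k}\leq \tilde C t^{-\epsilon}$ with $\epsilon=-\log(1-c\delta)/\log M$. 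For $t<M$ the bound is trivial after enlarging $\tilde C$. Since $u$ is only lower semicontinuous, the sets $\{u>s\}$ are open, so Lemma \ref{covering1} is applicable to $G_{k}\subset H_{k}$.

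\textbf{Hypothesis (2) of Lemma \ref{covering1}.} I need $|G_{k}|\leq(1-\delta)|B_{1}|$ for every $k\geq1$. Since $G_{k}\subset G_{1}$, it suffices to treat $k=1$. If $|\{u>M\}\cap B_{1}|>(1-\delta)|B_{1}|$, then Theorem \ref{corols} gives $u>1$ on all of $B_{1}$. This contradicts $\inf_{B_{1}}u\leq 1$, at least after replacing the hypothesis by $\inf_{B_{1}}u<1$, or equivalently applying the argument to $u/(1+\eta)$ and letting $\eta\to0$, which is harmless by Remark \ref{remsc}.

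\textbf{Hypothesis (1) of Lemma \ref{covering1}.} Take a ball $B=B_{\rho}(x_{1})\subset B_{1}$ with $|B\cap G_{k}|>(1-\delta)|B|$. I must show $B\subset H_{k}$, i.e.\ $u>M^{k-1}$ on $B$. Apply Corollary \ref{scaler} to the translate $x\mapsto u(x_{1}+x)$ on $B_{2\rho}$, with $r=2\rho$ and $L=M^{k-1}\geq1$. Note that $B_{2\rho}(x_{1})\subset B_{2}$ since $\rho\leq1$. The three hypotheses check out as follows:
\begin{itemize}
\item $\mathcal{L}^{-}_{\kappa}\leq 1\leq L$, because translation does not change the operator;
\item $u\geq0$;
\item $|\{u>LM\}\cap B_{\rho}(x_{1})|>(1-\delta)|B_{\rho}|$, which is exactly the assumption since $LM=M^{k}$.
\end{itemize}
The corollary then gives $u>M^{k-1}$ in $B_{\rho}(x_{1})$, i.e.\ $B\subset H_{k}$.

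Lemma \ref{covering1} now yields $|G_{k}|\leq(1-c\delta)|H_{k}|=(1-c\delta)|G_{k-1}|$ for $k\geq2$. Iterating down to $|G_{1}|\leq|B_{1}|$ gives the claimed decay.

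\textbf{Main obstacle.} The delicate point is the scaling in Corollary \ref{scaler}. The rescaled function must be defined on a ball of double radius, so the ball $B_{2\rho}(x_{1})$ has to stay inside $B_{2}$; this holds because $\rho\leq1$. Beyond that, the Hessian threshold must only decrease under the normalization $u\mapsto L^{-1}u(x_{1}+rx/2)$. With $L\geq1$ and $r\leq 2$, the rescaled threshold $\kappa r^{2}/(4L)\leq\kappa$, so by the monotonicity relation in $\kappa$ (item (2) of the scaling discussion) the rescaled function is still a supersolution with parameter $\leq\kappa_{0}$. I would verify this monotonicity carefully in the direction needed: a smaller threshold gives a smaller $\mathcal L^{-}$, hence a weaker constraint that is still satisfied. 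I would also check that the gradient term $\Lambda|p|$ only gains a factor $r/2\leq1$ under the scaling, so it does not spoil the inequality.
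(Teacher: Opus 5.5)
Your proposal is correct and follows the paper's proof: geometric decay of $|\{u>M^{k}\}\cap B_{1}|$ via Lemma \ref{covering1}, with hypothesis (2) from Theorem \ref{corols} together with $\inf_{B_{1}}u\leq 1$, and hypothesis (1) from Corollary \ref{scaler} with $L=M^{k-1}$. You are more careful than the paper about the borderline case $\inf_{B_{1}}u=1$, about $r=2\rho$ possibly exceeding $1$, and about passing from $t=M^{k}$ to all $t>0$.

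One slip in your last paragraph: a smaller threshold makes $\mathcal{L}^{-}$ larger, not smaller, because fewer matrices are sent to $-\infty$. So the rescaled inequality is a stronger condition than the original one, not a weaker one. This is harmless, since Theorem \ref{corols} can be applied directly with the threshold $\kappa r^{2}/(4L)\leq\kappa_{0}$.
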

\begin{proof}
We show 
\[|\{u>M^{k}\}\cap B_{1}|\leq C M^{-\epsilon k}~~~\text{for~all}~~k\in \mathbb{N},\]
where $M$ is the constant from Lemma \ref{scaler} and $\epsilon>0$ will be chosen below. The conclusion follows from this in a standard way.\\
Set $A_{k}:=\{u>M^{k}\}\cap B_{1},$ which are open sets in view of lower semicontinuity of $u.$ Moreover, in view of the assumption $\inf_{B_{1}}u\leq 1$ and Theorem \ref{corols}, we have $|A_{1}|\leq (1-\delta)|B_{1}|.$ Consequently, 
\[|A_{k}|\leq (1-\delta)|B_{1}|,\]
as $A_{k}\subset A_{1}.$ Now we wish to apply Lemma \ref{covering1}. Let $B\subset B_{1}$ be some ball satisfying $|B\cap A_{k+1}|>(1-\delta)|B|,$ then Corollary \ref{scaler} with $L=M^{k}$ implies that $B\subset A_{k}.$ Therefore, by applying Lemma \ref{covering1} we get 
\begin{equation}
|A_{k+1}|\leq (1-c\delta)|A_{k}|,
\end{equation}
and thus by induction, $|A_{K}|\leq (1-c\delta)^{k-1}(1-\delta)|B_{1}|=\tilde{C}M^{-\epsilon k},$ where $-\epsilon=\log(1-c\delta)/\log M$ and $\tilde{C}=(1-c\delta)^{-1}(1-\delta)|B_{1}|.$
\end{proof}
As we discussed in the introduction that we obtain the H\"{o}lder estimate as a consequence of $L^{\epsilon}$ estimate. Rather than applying it directly we use the following rescaled version of above Lemma.
\begin{corollary}\label{slepsilon}
There exist positive constants $\tilde{k}_{0}>0,~\epsilon_{1}$ and such that if $k\leq \tilde{k}_{0},$ then for any $r\leq 1,$ $\alpha\in(0,1)$ and any lower semicontinuous function $u:~\overline{B_{2r}}\longrightarrow\mathbb{R}$ satisfying 
\begin{equation*}
\left\{
\begin{aligned}
u&\geq0~~~\text{in}~~B_{2r},\\
\mathcal{L}_{\kappa}^{-}\big(D^{2}u,Du\big)&\leq \epsilon_{1}~~~\text{in}~~B_{2r},\\ 
|\{u>r^{\alpha}\}\cap B_{r}|&\geq \frac{1}{2}|B_{r}|,\\
\end{aligned}
\right.
\end{equation*}
we have $u\geq \epsilon_{1}r^{\alpha}$ in $B_{r}.$
\end{corollary}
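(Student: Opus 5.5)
We will deduce Corollary \ref{slepsilon} from Lemma \ref{lepaa} (the $L^{\epsilon}$ estimate) by scaling and contradiction. Set $\tilde u(x)=u(rx)/(\theta r^{\alpha})$ on $B_{2}$, with $\theta>0$ a small universal constant fixed below. Since $u\ge 0$ we have $\tilde u\geq0$ in $B_2$. The scaling property together with Remark \ref{remsc} gives
\[
\mathcal{L}^{-}_{\kappa r^{2-\alpha}/\theta}\big(D^{2}\tilde u,D\tilde u\big)\le \epsilon_{1} r^{2-\alpha}/\theta\le \epsilon_{1}/\theta .
\]
The drift term only improves, since it carries an extra factor $r\le1$. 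We choose $\epsilon_{1}\le\theta$, so the right-hand side is at most $1$.

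The effective ellipticity threshold becomes $\kappa r^{2-\alpha}/\theta\le\kappa/\theta$. We therefore take $\tilde\kappa_{0}=\theta\kappa_{0}$, with $\kappa_{0}$ from Lemma \ref{lepaa}. Then for $\kappa\le\tilde\kappa_{0}$ the new threshold is at most $\kappa_{0}$. By the monotonicity relation in $\kappa$, a larger threshold gives a stronger supersolution condition, so $\tilde u$ satisfies the hypotheses of Lemma \ref{lepaa} except possibly $\inf_{B_1}\tilde u\le 1$.

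Suppose, for contradiction, that $\inf_{B_r}u<\epsilon_{1}r^{\alpha}$. Since $\epsilon_1\le\theta$, this gives $\inf_{B_{1}}\tilde u<\epsilon_{1}/\theta\le 1$, and Lemma \ref{lepaa} applies. It yields
\[
|\{\tilde u>t\}\cap B_{1}|\le\tilde C t^{-\epsilon}.
\]
The measure hypothesis rescales to $|\{\tilde u>1/\theta\}\cap B_{1}|\ge\frac12|B_{1}|$. Combining the two, $\frac12|B_{1}|\le\tilde C\theta^{\epsilon}$, which is false once $\theta$ is chosen with $\tilde C\theta^{\epsilon}<\frac12|B_1|$. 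Hence $u\ge\epsilon_1 r^\alpha$ in $B_r$.

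The order of choices is as follows. First fix $\theta$ from $\tilde C$, $\epsilon$ and $|B_1|$. Then set $\epsilon_{1}=\theta$ and $\tilde\kappa_{0}=\theta\kappa_{0}$. All of these are universal and independent of $r$ and $\alpha$.

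The only delicate step is checking that the rescaled operator stays in the admissible regime. Dividing by $r^{\alpha}$ changes the constant in front of the Hessian, and the threshold ratio $r^{2-\alpha}\kappa/\theta$ must stay below $\kappa_0$ uniformly in $r\le1$. This holds because $2-\alpha>0$, but it is what forces $\tilde\kappa_0$ to be smaller than $\kappa_0$ by the factor $\theta$. I would verify the direction of the $\kappa$-monotonicity carefully here, since it is easy to reverse. If the paper's scaling convention instead puts $\gamma=1/(\theta r^\alpha)>1$ outside $[0,1]$, I would check directly from \eqref{hoill} that $\mathcal{L}^-_{\gamma r^2\kappa}$ is the correct rescaled operator for any $\gamma>0$. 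Its threshold would then be controlled by $r^{2-\alpha}\kappa/\theta$ as above.
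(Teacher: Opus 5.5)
Your proposal is correct and is essentially the paper's proof: your $\theta$ is the paper's $\tau^{-1}$, so $\epsilon_1=\theta$ and $\tilde\kappa_0=\theta\kappa_0$, and your contradiction argument is the contrapositive use of Lemma \ref{lepaa} that the paper invokes implicitly when it concludes $v>1$ in $B_1$. One wording slip: since $\mathcal{L}^-_\kappa=-\infty$ when $|X|\le\kappa$, it is a \emph{smaller} threshold that gives the stronger supersolution condition, not a larger one; this does not affect the argument, because Lemma \ref{lepaa} applies directly with threshold $\kappa r^{2-\alpha}/\theta\le\kappa_0$ and no monotonicity is needed.
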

\begin{proof}
Let $\tau$ be a universal constant such that $\tilde{C}\tau^{-\epsilon}<|B_{1}|/2,$ where $\tilde{C}$ and $\epsilon>0$ are constant from Theorem \ref{lepsilon}. Now consider the function $v(x)=\tau r^{-\alpha}u(rx).$ This function satisfies
\begin{equation*}
\left\{
\begin{aligned}
v&\geq0~~~\text{in}~~B_{2},\\
\mathcal{L}_{\kappa\tau r^{2-\alpha}}^{-}\big(D^{2}v,Dv\big)&\leq\epsilon_{1}\tau r^{2-\alpha}~~~\text{in}~~B_{2},\\ 
|\{v>\tau\}\cap B_{1}|&\geq \frac{1}{2}|B_{1}|.\\
\end{aligned}
\right.
\end{equation*}
Now, we choose $\epsilon_{1}=\tau^{-1}.$ Since $r\leq 1$ again by Remark \ref{remsc}, we have 
\[\mathcal{L}_{\kappa}^{-}\big(D^{2}v,Dv\big)\leq 1~\text{in}~B_{2}.\]
Thus by Theorem \ref{lepsilon} we find  $v>1$ in $B_{1}$ as long as $\tau r^{2-\alpha}k\leq \epsilon_{0}.$ Which is the case provided we choose $\tilde{\kappa}_{0}=\kappa_{0}\tau^{-1}=\kappa_{0}\epsilon_{1},$ where $\kappa_{0}$ is from Lemma \ref{lepaa}. The required result follows once we scale back.
\end{proof}
\subsection{Proof of Theorem \ref{mainthm}}
\begin{proof}
Following \cite{imbert2016estimates}, with $\epsilon_{1}$ as in the above corollary for any $\theta\leq 1,$ set 
\[w(x)=\frac{u(\theta x)}{C_{0}(1+\epsilon_{1}^{-1})}.\]
The above function $w$ satisfies
\begin{equation*}
\left\{
\begin{aligned}
\mathcal{L}_{\tilde{\kappa}}^{-}\big(D^{2}w,Dw\big)&\leq \epsilon_{1}~B_{1},\\ 
\mathcal{L}_{\tilde{\kappa}}^{+}\big(D^{2}w,Dw\big)&\geq-\epsilon_{1}~B_{1},\\
\|w\|_{L^{\infty}(B_{1})}&\leq 1,
\end{aligned}
\right.
\end{equation*}
where $\tilde{k}=\frac{\theta^{2}k}{C_{0}(1+\epsilon^{-1}_{1})}.$ Now we pick $\theta\leq 1$ such that 
\[\frac{\theta^{2}k}{C_{0}(1+\epsilon^{-1}_{1})}\leq \tilde{\kappa}_{0}.\]
Let us set $m_{k}=\min_{B_{2^{-k}}}w$ and $M_{k}=\max_{B_{2^{-k}}}w.$ The H\"{o}lder estimate at $0$ follows once we have 
\[M_{k}-m_{k}\leq 2\times 2^{-k\alpha},\]
for all $k.$ We follow the standard approach by induction. For $k=0$ the result is a consequence of $\|w\|_{L^{\infty}(B_{1})}\leq 1.$ Let us assume the result is true for some $k,$ that is, $M_{k}-m_{k}\leq 2\times 2^{-k\alpha}.$ Now assume that $M_{k}-m_{k}\geq 2\times 2^{-\alpha(k+1)}$ otherwise the result also follows for $(k+1).$ Observe also from definition of $m_{k+1}$ and $M_{k+1}$ it follows that $m_{k}\leq m_{k+1}$ and $M_{k+1}\leq M_{k}.$ We will get more precise monotonicity in view of the above assumption. For this let us set $a_{k}=\frac{b_{k}-a_{k}}{2}$ and consider the following two possibilities
\begin{equation*}
\begin{aligned}
&(i)~|\{w>a_{k}\}\cap B_{2^{-k-1}}|\geq\frac{|B_{2^{-k-1}}|}{2}\\
&\text{or}\\
&~(ii)~|\{w\leq a_{k}\}\cap B_{2^{-k-1}}|\geq\frac{|B_{2^{-k-1}}|}{2}.
\end{aligned}
\end{equation*}
 If $(i)$ happens we apply Corollary \ref{slepsilon} to $w-m_{k}$ with $r=2^{-k-1}$ to get $w-m_{k}\geq \epsilon_{1}2^{-(k+1)\alpha}$ for some $\epsilon_{1}.$ So we have $a_{k+1}\geq a_{k}+\epsilon_{1} 2^{-(k+1)\alpha}$ and consequently,
 \[M_{k+1}-m_{k+1}\leq M_{k}-m_{k}-\epsilon_{1}2^{-(k+1)\alpha}\leq (2^{\alpha+1}-\epsilon_{1})2^{-(k+1)\alpha}\leq 2\times 2^{-(k+1)\alpha},\]
 for any $\alpha$ satisfying $2^{1+\alpha}\leq 2+\epsilon_{1}.$\\ In the case $(ii)$ we may apply Corollary \ref{slepsilon} to $M_{k}-w$ with $r=2^{-k-1}.$ The H\"{o}lder estimate at all other points follow by scaling and translation.
\end{proof}

\section{Acknowledgement}
Second author is supported by National Board of Higher Mathematics grant no. 02011/36/2025/NBHM/RP/9466 and Anusandhan National Research Foundation-ANRF/ARGM/2025/002357/MTR.
\bibliography{ref.bib}
\bibliographystyle{abbrv}
\end{document}